\documentclass[12pt]{article}  

\setlength{\textwidth}{6.5in}  
\setlength{\oddsidemargin}{0in}     
\setlength{\evensidemargin}{0in}     
 
\usepackage{graphicx}    
\usepackage{amsmath}     
\usepackage{amscd} 
\usepackage{amssymb}     
\usepackage{amsfonts}     
\usepackage{amsthm}  
\usepackage{makeidx}      
\usepackage{verbatim}    
\usepackage{epsfig}      
\usepackage{tabls}  
 
\usepackage{hyperref} 
 
\newcommand{\N} {{\mathbb N}}

\newcommand{\e}{\varepsilon }       
               
\renewcommand{\a}{\alpha }

\newcommand{\reals} {{\mathbb R}}      
       
\newcommand{\nat} {{\mathbb N}}

\renewcommand\inf{\operatornamewithlimits{inf\vphantom{p}}}

\renewcommand{\epsilon}{\varepsilon}     
\renewcommand{\rho}{\varrho}     
\renewcommand{\phi}{\varphi}     
\renewcommand{\theta}{\vartheta}     
\newcommand{\wt}{\widetilde }

\renewcommand{\subset}{\subseteq}

\renewcommand\ker{{\rm ker}}

\newcommand\lall{\Lambda^{{\rm all}}}    
\newcommand\lstd{\Lambda^{{\rm std}}}

\newcommand\NM{{\cal N}}

\renewcommand\inf{\operatornamewithlimits{inf\vphantom{p}}}

\def\tlall{\widetilde \Lambda^{\rm all}}     

\newcommand{\enwall}{e^{\rm \, all-wor}_n}  
  
\newcommand{\enrall}{e^{\rm \, all-ran}_n}  
  
\newcommand{\enwALL}{\wt e^{\rm \, all-wor}_n}

\theoremstyle{definition}  
 
\newtheorem{thm}{Theorem} 
\newtheorem{rem}{Remark} 
\newtheorem{lem}{Lemma} 
 
\newtheorem{cor}{Corollary}

\newtheorem{defi}{Definition}  

\begin{document}  

%
%
%


\title{Discontinuous information
in the worst case and randomized settings}  

\author{Aicke Hinrichs, Erich Novak, Henryk Wo\'zniakowski} 

\date{May 12, 2011} 

\maketitle 

\begin{center}
{Dedicated to Hans Triebel on the occasion of his 75th birthday} 
\end{center} 

\begin{abstract}     
We believe that discontinuous linear information is never  
more powerful than continuous linear information 
for approximating  continuous operators.  
We prove such a result in the worst case setting. 
In the randomized setting we consider 
compact linear operators defined between Hilbert spaces.  
In this case, the use of discontinuous linear information 
in the randomized setting  
cannot be much more powerful than continuous linear information 
in the worst case setting.   
These results can be applied when function evaluations are used 
even if function values are defined only almost everywhere. 
\end{abstract}      

\maketitle  

\section{Introduction}  \label{s1}  
   
We study the approximation of an operator $S$ defined between  
normed spaces $F$ and $G$.  
The operator $S$ does not have to be linear or continuous.  
We approximate $S(f)$ by algorithms that use  
information consisting of finitely many 
continuous or discontinuous linear functionals $L_i : F \to \reals$.  
The error of such algorithms is defined either in the worst case or 
randomized setting. 
 
For continuous $S$, it is hard  
to imagine that one can learn about $S(f)$  
by using discontinuous information. 
On the other hand, it is well known that the Monte Carlo  
algorithm works nicely for multivariate integration defined 
for $L_2$-functions. This algorithm uses linear functionals given by   
function evaluations which are indeed discontinuous  
or even not always well defined. 
Hence, discontinuous information  
is actually used in computational practice  
and seems to be useful, at least in the randomized setting.  
 
This is the subject of this paper. We want to verify the power  
of \emph{discontinuous} linear information and compare it to the power  
of \emph{continuous} linear approximation. We study the worst case and 
randomized settings. This is done by comparing  
the $n$th minimal (worst case and randomized) errors  
which we can achieve  by using $n$ discontinuous or continuous linear 
functionals.  
 
In the worst case setting, we prove that as long as $S$ is a 
continuous operator (not necessarily linear) then the $n$th minimal 
errors are exactly the same for the class $\tlall$ of all 
discontinuous or continuous  
linear functionals and the class $\lall$ of all continuous linear 
functionals, see Theorem~\ref{adaptive}. This means that the use of 
discontinuous linear functionals does not help. The situation is quite 
different if $S$ is discontinuous. We present an easy example  
of a discontinuous linear functional $S$ for which  the $n$th minimal 
errors for the class~$\tlall$ are zero for all $n\ge1$, whereas  
the $n$th minimal errors for the class $\lall$ are infinity  
for all $n\ge1$.   
 
In the randomized setting, we mostly consider compact linear operators 
$S$ defined between Hilbert spaces~$F$ and $G$. In this case, we know 
from~\cite{No92}  
that the power of continuous linear functionals in the randomized 
setting is roughly the same as in the worst case setting, see 
Lemma~\ref{lemma7}. 
Here, the word ``roughly'' means that the $n$th minimal error in the 
randomized setting is at least as large as a half of the $(4n-1)$st 
minimal error in the worst case setting, and obviously it is at most 
as large as the $n$th minimal error in the worst case setting.   
By combining with
the result from the worst case setting, we conclude that the power of 
discontinuous linear functionals in the randomized setting is 
roughly the same as the power of continuous linear functionals in the 
worst case setting. On the other hand, if we drop the assumption  
that $S$ is a compact linear operator between Hilbert spaces then 
we can construct a problem~$S$ which is \emph{not} solvable in the worst 
case setting  and solvable and relatively easy in the randomized 
setting. Here, not solvable means that the $n$th minimal errors  
in the worst case setting  
do \emph{not} converge to zero, and relatively easy means the $n$th 
minimal errors in the randomized setting are of order $n^{-1/2}$. 
 
For many applications the class $F$ consists of functions and 
we can only use function evaluations 
for the approximation of $S$. The class of such evaluations is called 
\emph{standard} and denoted by $\lstd$.   
These evaluations are always linear but not always continuous. 
That is, we always have $\lstd\subseteq \tlall$, and, 
depending on the space $F$, we sometimes have $\lstd\subseteq \lall$.  
In either case, our results apply. 
In particular, if all function 
evaluations are discontinuous then they may be 
useless in the 
worst case setting since the minimal 
worst case error of any algorithm that uses $n$ function values is as 
good as a constant algorithm that uses \emph{no} function values, 
see Remark~\ref{fevalworst}. 

For some applications the space $F$ consists of 
equivalence classes of functions that are equal almost everywhere. 
This is the case for $F_1=L_2(D)$ for
some $D\subset\reals^d$. Then function evaluations are not even well 
defined. We extend our analysis also to such function evaluations
and show that again the same results as before hold. 
 
\section{Worst Case Setting}  \label{s2}  

For arbitrary normed spaces $F$ and $G$,  
consider an arbitrary operator $S:F\to G$ 
that does not have to be linear or continuous. 
We approximate $S(f)$ for $f$ from the unit ball of $F$  
by algorithms that use finitely many linear  
functionals from $\lall$ or from $\tlall$, respectively.  
More precisely, we consider   
algorithms $A_n:F\to G$ given by  
\begin{equation}  \label{eq01}  
A_n(f)=\phi_n(L_1(f),L_2(f),\dots,L_n(f)),  
\end{equation}  
where $n$ is a nonnegative integer, $\phi_n:\reals^n\to G$ is an  
arbitrary mapping, and $L_j\in \Lambda$, where  
$\Lambda \in \{\lall, \tlall \}$.  
Hence, for $\Lambda=\lall$ we only use \emph{continuous} linear  
functionals, whereas for $\Lambda=\tlall$ we may also use 
\emph{discontinuous} linear functionals. 
 
The choice of $L_j$ can be \emph{nonadaptive} or \emph{adaptive}. 
It is nonadaptive if the functionals $L_j$ are the same for all $f\in 
F$, and it is adaptive if $L_j$  depends on the  
already computed values $L_1(f),L_2(f),\dots,L_{j-1}(f)$.  
That is, 
$$ 
N(f)=(L_1(f),L_2(f),\dots, L_n(f)) 
$$ 
is the information used by the algorithm $A_n$ and 
$L_j=L_j(\,\cdot\, ;L_1(f),L_2(f),\dots,L_{j-1}(f))\in\Lambda$.  
If the choice of all $L_j$'s  is independent of $f\in F$ then $N$  
is \emph{nonadaptive} information, otherwise if at least one $L_j$ 
varies with $f\in F$ then $N$ is \emph{adaptive} information.    
For $n=0$, the mapping $A_n$ is a constant element of the space~$G$.   
More details can be found in e.g.,~\cite{No88,NW08,TWW88}.   
We define the error of such algorithms  
by taking the worst case setting, i.e.,  
$$ 
e(A_n) = \sup_{\Vert f \Vert_F < 1}  
\big\Vert S(f)  - A_n(f)\big \Vert_G. 
$$
Observe that it is enough that the operator $S$ is defined on the open
unit ball in $F$, not necessarily on the whole space $F$.
We take the open unit ball instead of the more standard case of the 
closed unit ball
of $F$ in the definition of the worst case error since this includes
also operators with singularities on the boundary of the unit ball.
For linear continuous $S$, or more generally 
for $S$ uniformly continuous on
the closed unit ball of $F$, this does not make a difference.  

We define the $n$th minimal errors of approximation of $S$ in the 
worst case setting as follows.   

\begin{defi}   
For $n=0$ and $n\in \nat:=\{1,2,\dots \}$, let  
$$  
\enwall(S) = \inf_{A_n \ {\rm with}\ L_j\in \lall}   
e(A_n)  
$$  
and   
$$  
\enwALL(S) = \inf_{A_n\ {\rm with}\ L_j\in \tlall}   
e(A_n) .  
$$  
\end{defi}   
 
For $n=0$, we obtain 
$$  
e^{\rm \, all-wor}_0(S)= \wt e^{\rm \,   all-wor}_0(S)=  
\inf_{g\in G}\ \sup_{\Vert f \Vert_F < 1} \Vert S(f)-g \Vert_G.   
$$   
It is easy to see that the best algorithm  
is $A_0=0$ if we assume that $S(f)=-S(-f)$ for all $\|f\|_F<1$. 
Then  
$$  
e^{\rm \, all-wor}_0(S) 
= \wt e^{\rm \,  all-wor}_0(S)=  
\sup_{\Vert f \Vert_F < 1} \Vert S(f)  \Vert_G.   
$$   
The error $e^{\rm \ all-wor}_0(S)$ 
is the initial error that can be achieved without   
computing any linear functional on the elements $f \in F$.  
Clearly,  
$$  
\enwALL(S) \le \enwall(S)\ \ \ \mbox{for all} \ \ \  n\in\nat.   
$$   
The sequences $\left\{\enwALL(S)\right\}$ and $\left\{\enwall(S)\right\}$  
are both non-increasing but not necessarily convergent to zero.   
 
We will use the following fact from  
functional analysis, see, e.g., \cite[Ch. 3]{Bol90}.   
 
\begin{lem}  \label{lemma2}  
Assume that $F$ is a normed space and $L \in \tlall$  
is discontinuous.  Then for all real $\alpha$ the set 
$\{ f \in F \mid L(x) = \alpha  \}$ \ is dense in \ $F$.  
\end{lem}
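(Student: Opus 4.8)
The plan is to prove that a discontinuous linear functional maps every level set to a dense subset, which is a standard consequence of the fact that discontinuous linear functionals are unbounded on every neighborhood of any point. Let me sketch the key steps.

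First I would reduce to the case $\alpha = 0$. If $L$ is discontinuous, then the kernel $\ker L = \{f \in F \mid L(f) = 0\}$ is a proper dense subspace: indeed, a linear functional is continuous if and only if its kernel is closed, so since $L$ is discontinuous its kernel is not closed, and because $\ker L$ has codimension one in $F$, its closure $\overline{\ker L}$ is either $\ker L$ itself or all of $F$; since it is not closed, we must have $\overline{\ker L} = F$, i.e., $\ker L$ is dense. This handles the level set for $\alpha = 0$.

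For general $\alpha$, I would pick any fixed $f_0 \in F$ with $L(f_0) = \alpha$; such an element exists because $L$, being nonzero (a discontinuous functional cannot be identically zero), is surjective onto $\reals$. Then the level set $\{f \in F \mid L(f) = \alpha\}$ is precisely the affine translate $f_0 + \ker L$. Since $\ker L$ is dense in $F$ and translation by $f_0$ is a homeomorphism of $F$, the translate $f_0 + \ker L$ is dense in $f_0 + F = F$. This completes the argument.

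The only mild subtlety, and the step I would state carefully, is the codimension argument showing that a non-closed kernel is automatically dense: the point is that any subspace strictly containing the codimension-one subspace $\ker L$ must be all of $F$, so the closure, being a subspace containing $\ker L$, has no choice but to equal $F$ once we know it is not $\ker L$. This is precisely the functional-analytic fact being cited from \cite{Bol90}, and everything else is routine. I expect no genuine obstacle here; the statement is essentially a packaging of the equivalence between continuity of a linear functional and closedness of its kernel.
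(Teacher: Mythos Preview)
Your argument is correct and complete. The paper does not actually prove this lemma; it simply states it as a known fact from functional analysis with a reference to \cite[Ch.~3]{Bol90}, so there is no ``paper's own proof'' to compare against. Your proof via the equivalence of continuity with closedness of the kernel, the codimension-one argument forcing $\overline{\ker L}=F$, and the translation to handle general~$\alpha$ is exactly the standard route and is precisely what the cited reference contains.
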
  

We are ready to prove that discontinuous linear functionals do not 
help for the approximation of continuous operators in the worst case 
setting.   

\begin{thm}\label{adaptive} 
Let $F$ and $G$ be normed spaces and let $S:F\to G$ be 
continuous. Then 
$$ 
e_n^{\rm \, all-wor}(S) = \widetilde e_n^{\rm \, all-wor}(S).  
$$ 
\end{thm}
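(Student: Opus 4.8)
The plan is to prove the two inequalities separately. The inequality $\enwALL(S)\le\enwall(S)$ is immediate, since $\lall\subseteq\tlall$ and the infimum defining $\enwALL(S)$ runs over a larger family of algorithms. All the work is in the reverse inequality $\enwall(S)\le\enwALL(S)$, which I would get by a matching argument: for an arbitrary algorithm $\tilde A_n$ with $L_j\in\tlall$ I construct an algorithm $A_n$ with $L_j\in\lall$ satisfying $e(A_n)\le e(\tilde A_n)$, and then take the infimum over $\tilde A_n$. The idea is that the value of a discontinuous functional can be ``faked'' at a nearby point, so it carries no information a continuous algorithm cannot reproduce up to the continuity modulus of $S$.

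The engine is a strengthening of Lemma~\ref{lemma2}: if $u_1,\dots,u_p$ are continuous linear functionals and $v_1,\dots,v_q$ are linear functionals on $F$ such that no functional in $\linspan\{u_i,v_k\}$ that genuinely involves the $v_k$ is continuous, then for every $f_0$ and every $d\in\reals^q$ the set $\{f : u_i(f)=u_i(f_0)\ \forall i,\ v_k(f)=d_k\ \forall k\}$ contains points arbitrarily close to $f_0$. I would prove this by passing to the closed, finite-codimensional subspace $V=\bigcap_i\ker u_i$, noting first that each $v_k|_V$ stays discontinuous and that no nontrivial combination of them is continuous on $V$ (this uses the elementary fact that restricting a functional to the kernel of a continuous functional cannot turn a discontinuous functional into a continuous one). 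Then $\bigcap_k\ker v_k$ is dense in $V$: if its closure were a proper closed subspace, Hahn--Banach would furnish a nonzero continuous annihilator, i.e.\ a continuous nontrivial combination of the $v_k$ modulo the $u_i$, contradicting the hypothesis. Linear independence makes $(v_1,\dots,v_q)\colon V\to\reals^q$ surjective, so each prescribed fibre is a dense coset, which after translation gives the asserted points near $f_0$.

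For the construction I fix any $f_0$ with $\|f_0\|_F<1$ and run $\tilde A_n$ on $f_0$, recording along its (possibly adaptive) branch the prescribed functionals $\Lambda_1,\dots,\Lambda_n$. Processing these in order, I peel off the continuous part: whenever the span of the functionals seen so far acquires a new continuous direction, $A_n$ queries that continuous functional, and whenever a genuinely discontinuous direction appears, $A_n$ queries nothing and substitutes a fixed value, producing a simulated data vector $\hat y$ consistent with the measured continuous information. This yields a legitimate adaptive $\lall$-algorithm using at most $n$ continuous functionals. By the refined lemma there is $f$ arbitrarily close to $f_0$, with $\|f\|_F<1$, on which all queried continuous functionals equal their values at $f_0$ and all discontinuous directions take the substituted values; hence $\tilde A_n$ follows the same branch on $f$ and outputs exactly $A_n(f_0)$. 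Therefore $\|S(f)-A_n(f_0)\|_G\le e(\tilde A_n)$, and letting $f\to f_0$ and invoking continuity of $S$ gives $\|S(f_0)-A_n(f_0)\|_G\le e(\tilde A_n)$; taking the supremum over $f_0$ yields $e(A_n)\le e(\tilde A_n)$, as required.

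The main obstacle is the interplay of adaptivity with the fact that a linear combination of discontinuous functionals may be continuous. Because of this one cannot simply erase discontinuous functionals one at a time -- forcing several of them to $0$ at a single nearby $f$ can be infeasible when some difference is continuous -- and, since $S$ is only assumed continuous, there is no cheap nonadaptive reduction to fall back on. The step-by-step separation of the continuous directions along the branch, together with feeding $\tilde A_n$ a data vector compatible with the measured continuous part, is exactly what keeps the fooling point realizable; the delicate verifications are that this bookkeeping defines a genuine $\lall$-algorithm and that the refined density lemma applies to precisely the functionals arising on the branch.
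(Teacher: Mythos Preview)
Your proposal is correct and follows essentially the same strategy as the paper: replace each discontinuous functional along the adaptive branch by either its continuous part (when one exists in the span) or by zero, then use a density argument together with continuity of $S$ to show the modified algorithm has no larger error. The only organizational differences are that you work directly with algorithms rather than the radius of information, and you package the density step into a single Hahn--Banach lemma instead of the paper's step-by-step induction showing $B_k$ dense in $A_k$; both routes yield the same comparison $e(A_n)\le e(\tilde A_n)$.
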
  
 
\begin{proof} 
We may assume that ${\rm dim}(F)=\infty$ since otherwise 
$\tlall=\lall$ and there is nothing to prove.  
Consider arbitrary adaptive information $N=(L_1,L_2,\dots,L_n)$ 
with  
$$L_j=L_j(\cdot;y_1,y_2,\cdots,y_{j-1})\in \tlall\ \ \  
\mbox{and}\ \ \  
y_i=L_i(f;y_1,y_2,\dots,y_{i-1})\ \  \mbox{for}\ \ f\in F. 
$$ 
It is well known that the infimum of the worst case errors of 
algorithms $A_n$ that use information~$N$ is given by the radius of  
information $N$, 
\begin{equation}\label{radius} 
r(N)=\sup_{y\in N(F)}\, {\rm rad}(\{S(f)\ |\ \ N(f)=y,\,\|f\|_F<1\}), 
\end{equation} 
where ${\rm rad}(A)=\inf_{g\in G}\sup_{a\in A}\|g-a\|$ denotes the 
radius of a set $A\subset G$, see \cite{TWW88}. Without loss of 
generality we may assume that the linear functionals 
$L_1,L_2,\dots,L_n$ are linearly independent since otherwise the 
choice of a linear functional, say, $L_j$ that is linearly dependent 
on $L_1,L_2,\dots,L_{j-1}$ does not increase our knowledge about the 
element $f$. This implies that we may assume that $N(F)=\reals^n$. 
 
For $y\in N(F)=\reals^n$ and $j=1,2,\dots,n$, define  
$$ 
B_k=B_k(y)=\{f\in F\ |\ \  
L_j(f)=y_j\ \ \mbox{for}\ \ j=1,2,\dots,k\}, 
$$ 
Then the $B_k$ are affine subspaces of $F$. 
 
For each affine subspace $B$ of $F$, we associate the uniquely 
determined linear subspace $\wt B$ such that $B = f + \wt B$ for 
any $f\in B$.  
It is easily seen that a linear functional on $F$ 
is continuous on $B$ if and only if it is continuous on $\wt B$. 
In particular, 
$$ 
  \wt B_k =\wt B_k(y) = \ker L_1 \cap \ker L_2 \cap \dots \cap \ker L_k, 
$$  
and continuity of $L_{k+1}$ on $B_k$ is equivalent to continuity  
of $L_{k+1}$ on $\wt B_k$.  
 
We may now further assume for $k=1,\dots,{n-1}$ that the functional 
$L_{k+1}$ satisfies the following condition: 
\begin{center} 
either $L_{k+1}$ is continuous on $F$ \ or \  
$L_{k+1}$ is discontinuous on $B_k$. 
\end{center} 
Indeed, assume that $L_{k+1}$ is continuous on $B_k$. 
Then it is also continuous on $\wt B_k$. Let $L$ be a continuous 
linear extension of $L_{k+1}$ on $\wt B_k$ to the whole space~$F$.  
Then $L_{k+1} - L$ is a linear functional which is 0 on $\wt B_k$, so 
that $\ker (L_{k+1} - L ) \supset \wt B_k$. 
This implies that $L_{k+1} - L$ is in the span of $L_1,L_2,\dots,L_k$, 
and for some numbers $a_j$ we have  
$L(f)=L_{k+1}(f)+\sum_{j=1}^ka_jL_j(f)$ for all $f$.  
Hence knowing $L_j(f)$ for $j=1,2,\dots,k$, we know $L(f)$ iff we know 
$L_{k+1}(f)$.  
This means that we can replace the functional $L_{k+1}$  
in the information $N$ with the  
continuous functional $L$ without essentially changing  
the information and without changing its radius. 
 
We now define the information $N^*=(L_1^*,L_2^*,\dots,L_n^*)$ with 
adaptively chosen  
$$ 
L^*_j=L_j^*(\cdot;y^*_1,y^*_2,\dots,y^*_{j-1})\in\lall\ \ \  
\mbox{and}\ \ \  y^*_i=L_i^*(f;y^*_1,y^*_2,\dots,y^*_{i-1})\ \  
\mbox{for}\ \  f\in F 
$$  
such that $r(N^*)\le r(N)$. Since $N$ is arbitrary and $N^*$ consists  
of continuous linear functionals, this will prove the theorem. 
 
The functionals $L_j^*$ are defined inductively.  
We define $L^*_1=0$ if $L_1$ is discontinuous on~$F$, and otherwise  
we take $L^*_1=L_1$.  
Observe that in the case $L_1^*=0$ the next functional $L_2^*$  
cannot be chosen adaptively since $L_1^*(f)=0$ for all $f \in F$.  
Therefore, in general, $L_2^*=L_2^*(\cdot,0)$ will be different from  
$L_2=L_2(\cdot;y_1)$, with $y_1=L_1(f)$, even if $L_2$ is continuous.  
 
Assume now inductively that for all 
$j=1,2,\dots,k<n$, we have already defined 
$$ 
L_j^*=L_j^*(\cdot;y^*_1,y^*_2,\dots,y^*_{j-1})\in\lall\ \ \  
\mbox{with}\ \ \  
y^*_j=L^*_j(f;y^*_1,y^*_2,\dots,y^*_{j-1})\ \  
\mbox{for all}\ \  f \in F. 
$$  
 
Let $L_{k+1}=L_{k+1}(\cdot;y^*_1,y_2^*,\dots,y^*_k)\in \tlall$  
be the next linear functional for the original information~$N$.  
Let $B_k = B_k(y^*)$ be defined as above for $y=y^*$. 
Define also 
$$ 
A_k=A_k(y^*)=\{f\in F\ |\ \  
L^*_j(f)=y^*_j\ \ \mbox{for}\ \ j=1,2,\dots,k\} .
$$ 
If $L_{k+1}$ is continuous, we set $L^*_{k+1}=L_{k+1}$, 
if $L_{k+1}$ is discontinuous on $B_{k}$, we define  
$L^*_{k+1}=0$. In the latter case only $y^*_{k+1}=0$ 
needs to be further considered. 
As shown above, this completes the definition of  
$N^*=(L_1^*,L_2^*,\dots,L_n^*)$ that consists of 
$n$ continuous linear functionals~$L_j^*$.  
 
We now show inductively that $B_j\subseteq A_j$ and $B_j$ is dense in $A_j$ 
for all $j=1,2,\dots,n$.   
 
Indeed, for $j=1$ we have $B_1=A_1$ if $L_1$ is continuous on $F$, and  
$$ 
B_1=\{f\in F\ |\ \ L_1(f)=0\}\subseteq A_1=F 
$$ 
if $L_1$ is discontinuous on $F$.  
{}From Lemma~\ref{lemma2} we know that $B_1$ is dense 
in $A_1$.  
 
Assume now that $B_j\subseteq A_j$ and $B_j$ is dense in 
$A_j$ for $j=1,2,\dots,k<n$ with $k\ge1$.  
Consider first the case when $L_{k+1}$ is continuous.  
Then we have $L^*_{k+1}=L_{k+1}$ and 
\begin{eqnarray*} 
A_{k+1}&=&\{f\in A_k\ |\ \ L^*_{k+1}(f)=y^*_{k+1}\}\\ 
B_{k+1}&=&\{f\in B_k\ |\ \ L_{k+1}(f)=y^*_{k+1}\}=\{f\in B_k\ |\ \  
L^*_{k+1}(f)=y^*_{k+1}\}. 
\end{eqnarray*} 
Hence 
$$ 
A_{k+1}=A_k\,\cap\, C_k\ \ \ \mbox{and}\ \ \ B_{k+1}=B_k\,\cap\,C_k\ \ \ 
\mbox{with}\ \ \ 
C_k=\{f\in F\ | \ \ L^*_{k+1}(f)=y^*_{k+1}\}. 
$$ 
Clearly, $B_k\subseteq A_k$ implies that $B_{k+1}\subseteq A_{k+1}$. 
We need to show that if $B_k$ is dense in $A_k$ then $B_k\cap C_k$ is 
dense in $A_k\cap C_k$. This is obvious if $L^*_{k+1}=0$. Assume then  
that $L^*_{k+1}\not=0$.  
Take $f\in A_k\cap C_k$. Then for any positive 
$\e$ there exists $f_\e\in B_k$ such that $\|f-f_\e\|\le\e$.  
For $g_{k+1}\in F$ with $L_j(g_{k+1})=0$ for 
$j=1,2,\dots,k$ and $L^*_{k+1}(g_{k+1})=1$, define 
$$ 
\tilde{f}_\e=f_\e+\left(y^*_{k+1}-L^*_{k+1}(f_\e)\right)\,g_{k+1}. 
$$ 
Then $\tilde{f}_\e\in B_k\cap C_k$ and since $L^*_{k+1}(f)=y^*_{k+1}$ 
we have  
$$\|f-\tilde{f}_\e\|\le 
\|f-f_\e\| +|L^*_{k+1}(f)-L^*_{k+1}(f_\e)|\,\|g_{k+1}\| 
\le \|f-f_\e\|\left(1+\|L^*_{k+1}\|\,\|g_{k+1}\|\right). 
$$ 
Hence  $B_k\cap C_k$ is dense in $A_k\cap C_k$, as needed.  
 
Consider now the case when $L_{k+1}$ is discontinuous on $B_k$. Then 
$L^*_{k+1}=0$ and $y^*_{k+1}=0$. We now have  
$$ 
A_{k+1}=A_k\ \ \ \mbox{and}\ \ \ B_{k+1}=B_k\,\cap\,C_k\ \ \ 
\mbox{with}\ \ \ 
C_k=\{f\in F\ | \ \ L_{k+1}(f)=0\}. 
$$ 
Clearly $B_{k+1}\subseteq B_k\subset A_k=A_{k+1}$.  
Since $L_{k+1}$ is discontinuous on $B_k$ it is also discontinuous on 
$F$. Then Lemma~\ref{lemma2} says that $C_k$ is a linear subspace 
which is dense in $F$. We want to  
show that $B_k\cap C_k$ is dense in $A_k=A_{k+1}$. 
Similarly as before, we take $f\in A_k$. For any positive $\e$ we can 
find $f_\e\in B_k$ such that $\|f-f_\e\|\le\e$. If $L_{k+1}(f_\e)=0$  
then $f_\e\in B_k\cap C_k$ and we are done. Assume then that 
$L_{k+1}(f_\e)\not=0$. We now choose  
$$ 
 g_{k+1}\in \wt B_k  = \ker L_1 \cap \ker L_2 \dots \cap \ker L_k 
 \ \ \ \mbox{with}\ \ \ L_{k+1}(g_{k+1})=1. 
$$  
Since $L_{k+1}$ is discontinuous on $B_k$, 
it is also discontinuous on $\wt B_k$, 
so the element $g_{k+1}$  
can be of an arbitrary small norm. We choose a nonzero $g_{k+1}$ 
such that $\|g_{k+1}\|\le \e/|L_{k+1}(f_\e)|$. Then 
$$ 
\tilde{f}_\e=f_\e-L_{k+1}(f_\e)\,g_{k+1} 
$$    
belongs to $B_k\cap C_k$ and  
$$ 
\|f-\tilde{f}_\e\|\le \|f-f_\e\|+|L_{k+1}(f_\e)|\,\|g_{k+1}\|\le 2\e. 
$$ 
Hence, $B_k \cap C_k$ is dense in $A_k$, as needed. 
This completes the proof that $B_n=N^{-1}(y^*)$ is dense in 
$A_n=(N^*)^{-1}(y^*)$.  

Due to continuity of $S$, the set   
$B(y^*):=\{S(f)\in G\ |\  N(f)=y^*,\ \|f\|_F<1\}$ is dense in  
$A(y^*):=\{S(f)\in G\ |\ N^*(f)=y^*,\ \|f\|_F<1\}$  
and therefore ${\rm rad}(A(y^*))={\rm rad}(B(y^*))$. 
This holds for all $y^*\in N^*(F)$ and therefore $r(N^*)\le r(N)$, as   
needed.  
\end{proof} 
 
The assumption on continuity of $S$ in Theorem~\ref{adaptive} is 
needed. Indeed,  
assume that ${\rm dim}(F)=\infty$. Then there are 
discontinuous linear functionals $L:F\to \reals$.  
Define $S=L$. Note that now 
$e^{\rm \ all-wor}_0(S)= \wt e^{\rm \ all-wor}_0(S)= \infty$.  
 
Clearly, the worst case error $A_1(f)=L(f)=S(f)$ 
is zero. Therefore  
$\enwALL(S)=0$ for all $n\ge1$. On the other hand,  
we know that adaption does not help for linear functionals as 
proved by Bakhvalov. Furthermore,  
if we use $n$  
linear continuous nonadaptive functionals $L_j$  
then Smolyak's theorem tells us  
that the best $\phi$ in~\eqref{eq01} is linear, i.e., there are some real 
numbers $a_j$ for which the algorithm   
$A_n(f)=\sum_{j=1}^na_jL_j(f)$ minimizes the worst case error among 
all algorithms that use $N=(L_1,L_2,\dots,L_n)$.  
The results of Bakhvalov and Smolyak can be found 
in~\cite{No88,NW08,NW10,TWW88}. However, 
$S(f)-A_n(f)$ is still a discontinuous linear   
functional and therefore its worst case error is infinite. Since this 
holds for all continuous $N$, we have   
$\enwall(S)=\infty$. Hence,  
$$ 
\widetilde e_n^{\rm \, all-wor}(S)=0<e_n^{\rm 
\,  all-wor}(S)=\infty 
\ \ \ \mbox{for all}\ \ \ n\in\N. 
$$ 
Although Theorem~\ref{adaptive} deals with adaptive 
information, it is known that adaptive information does \emph{not} 
help for many problems. This holds for linear operators 
$S$ defined over Hilbert spaces $F$ or if $S$ is a linear functional,  
whereas for linear operators defined 
over arbitrary normed spaces adaption may help at most by a factor of 
two. The reader may find a survey of such results 
in Chapter 4 of \cite{NW08}.  
 
\begin{rem}  \label{rem2.4} 
Seeing the proof of Theorem~\ref{adaptive}, we may think that  
the use of discontinuous linear functionals is useless for 
approximating continuous $S$. More precisely, assume that  
we use nonadaptive $N=(L_1,L_2,\dots,L_n)$ for which \emph{all}  
$L_j$'s are discontinuous linear functionals. What is the radius  
of $N$? Is it the same as the radius of zero information?  
This is not true. We now show that we can achieve the radius  
of nonadaptive information consisting of $n-1$ \emph{continuous} linear  
functionals.  Indeed, let $N^*=(L^*_2,L_3^*,\dots,L^*_{n})$  
 be nonadaptive continuous information, $L_j^*\in\lall$.  
Take $N=(L_1,L_1+L^*_2,\dots,L_1+L^*_n)$ with a discontinuous linear 
functional $L_1$. Then all $L_1+L_j^*$'s are discontinuous. 
However,  if we compute  
$y_1=L_1(f)$ and $y_j=L_1(f)+L^*_j(f)$  
then we also know $L^*_j(f)=y_j-y_1$ for $j=2,3,\dots,n$. 
Hence, we know $N^*(f)$ and therefore $r(N)\le r(N^*)$, as claimed.  
 
It is interesting to see what happens if we apply the proof of 
Theorem~\ref{adaptive} to $N$. Since $L_1$ is discontinuous we obtain 
$L_1^*=0$. However, $L_2+L^*_1$ on $B_1=\{f\in F\ |\ \ L_1(f)=y_1\}$ 
is $y_1+L^*_1$ and therefore it is \emph{continuous}. 
Then we replace $L_2$ by $L_2^*$. 
Similarly, all $L_j$ with $j\ge2$ will be replaced by 
$L_j^*$. The proof of Theorem~\ref{adaptive} shows that $r(N)=r(N^*)$. 
\end{rem} 
 
\begin{rem}\label{fevalworst} 
Assume now that $F$ is a space of functions $f:D\to \reals$. 
We consider the class $\lstd$ of all function evaluations given by
the linear functionals $L_x(f)=f(x)$ for $f\in F$.  
Let $e^{\rm std-wor}_n(S)$ denote the $n$th 
minimal worst case errors of algorithm $A_n$ with $L_j\in \lstd$, 
i.e., algorithms that use at most $n$ function evaluations. 

Assume that $L_x$ is \emph{discontinuous} on  
$$
F\, \cap \,\ker\, L_{x_1}\, \cap \, \ker\, L_{x_2}\, \cap\, 
 \dots \,\cap\,  \ker\,  L_{x_n}
$$
for all $x\in D\setminus\{ x_1, x_2, \dots , x_n \}$. 
For instance $F=L_2(D)\cap C(D)$ 
equipped with the $L_2(D)$ norm is such an example. 

Then it is easy to check that the proof of Theorem~\ref{adaptive} 
yields $L_j^*=0$ for all $j$. That is, 
$$ 
e^{\rm std-wor}_n(S)= 
e^{\rm std-wor}_0(S)= 
e^{\rm all-wor}_0(S).      
$$ 
Hence, in this case the use of function evaluations is completely useless. 

However, similarly as in Remark~\ref{rem2.4}, 
one can construct examples 
where all function evaluations are \emph{discontinuous} on $F$ 
but may be \emph{continuous} on 
$F\, \cap \,\ker L_{x_1}\, \cap \, \ker L_{x_2}\, \cap\, 
 \dots \,\cap\,  \ker  L_{x_n}$, and still they are useful. 
Indeed, consider 
$$
F= C([0,1])\, \cap\, \bigg\{ f \ \bigg|\ \ 
\int_0^1 f(x) \, {\rm d} x = \frac{f(0)+f(1)}2\, 
\bigg\} 
$$
equipped with the $L_2$ norm. 
Then the integration problem 
$S(f) = \int_0^1 f(x) \, {\rm d} x$ is continuous and all function evaluations
are discontinuous on $F$. Nevertheless, $L_1(f)=f(1)=S(f)/2$ on
$F\,\cap\,\ker \,L_0$ is continuous, and we can compute $S(f)$ exactly using 
two function values of~$f$.  
\end{rem} 

\begin{rem}   \label{rem5} 
For a continuous linear $S$ and a Banach space $F$,
Theorem~\ref{adaptive} can be proved modulo a factor of $\tfrac12$
by using the known relations between the Gelfand numbers $c_n(S)$ 
and the minimal errors $e_n^{\rm all-wor}(S)$,
see~\cite{NW08} for a survey of related results. 
In particular, we use that
$$ 
 c_n(S) \le \enwall(S) \le 2 c_n(S),
$$
for any linear and continuous operator $S$, see 
\cite[Section 5.4 of Chapter 4]{TWW88}.    

It is known, see \cite[Prop. 2.7.5]{CS90}, 
that the Gelfand numbers $c_n(S)$ are 
local in the sense that 
$$ c_n(S) = \sup_{M} c_n(S|_M)$$
where the supremum is taken over all finite dimensional subspaces 
$M$ and $S|_M$ is the restriction of $S$ to $M$.

Altogether, we obtain the following inequality:  
\begin{eqnarray*}
\enwall(S) 
  &\le& 
  2 c_n(S) = 2 \sup_{M} c_n(S|_M) \le 2 \sup_{M} \enwall(S|_M) 
  = 2 \sup_{M} \enwALL(S|_M) \\
  &\le& 2  \enwALL(S),
\end{eqnarray*}
i.e.
$$
\enwall(S)  \le 2 \cdot \enwALL(S) . 
$$

We return to general continuous not necessarily linear operators $S$. 
We conjecture that the worst case 
error $\enwall(S)$ is itself a local quantity
at least for compact operators~$S$, i.e., 
\begin{equation}\label{locerror}
\enwall(S)=\sup_{M}\,\enwall(S|_M),
\end{equation}
where again the supremum is 
taken over all finite dimensional 
subspaces~$M$ of ~$F$, 
and $S|_M$ is the restriction of $S$ to~$M$.
This would give 
another and a much shorter proof of Theorem~\ref{adaptive}.
Indeed, \eqref{locerror} implies 
$$
\enwall(S)=\sup_{M}\, \enwall(S|_M)=
  \sup_{M}\, \enwALL(S|_M) \le \enwALL(S),
$$
as claimed.

Although we do not know if~\eqref{locerror} holds,
we easily conclude from 
the local property of the Gelfand numbers that
at least the weak local property holds for a continuous linear $S$
and a Banach space $F$, namely
$$ 
\enwall(S) \le c_n(S)= \sup_{M}\,c_N(S|_M)\le 2\,
\sup_M\,\enwall(S|_M). 
$$ 
It is also known that the approximation 
numbers $a_n(S)$ are local as long as $S$ is
a compact operator or~$G$ is a dual space, 
see \cite[Prop. 2.7.1 and 2.7.3]{CS90}.
That translates into the fact that for
linear nonadaptive algorithms in the worst case setting, 
the use of discontinuous functionals  
does not help. A weak form (with a factor 5) is true 
for arbitrary $G$ and $S$, see \cite[Prop. 2.7.4]{CS90}.
\end{rem} 

\begin{rem} 
Heinrich~\cite{He89} proves relations between linear $n$-widths 
and approximation numbers and shows that they coincide for 
compact and absolutely convex subsets of a normed space. 
There is also an example showing that, in general, 
for relatively compact 
absolutely convex sets equality does not hold.
The spirit of these results is similar, but there is a difference 
as can be seen from Proposition 1.3 of that paper: 
The aim is to compare general or continuous linear information 
applied to $g=S(f) \in G$, while we compare general or 
continuous linear information applied to $f \in F$. 
\end{rem}  
 
\section{Randomized Setting}   \label{s3}  
We now deal with randomized algorithms.  
We consider,  
as in \cite[Theorem 4.42]{NW08}, only measurable algorithms.  
Hence we use the following definitions.  
 
A randomized algorithm $A$ is a pair consisting of a probability space 
$(\Omega, \Sigma, \mu)$ and  
a family $(N_\omega, \phi_\omega )_{\omega \in \Omega}$ 
of mappings such that the following holds: 
 
\begin{enumerate} 
\item 
For each fixed $\omega\in\Omega$,   
the mapping $A_\omega=\phi_\omega \circ N_\omega$ 
is a deterministic algorithm defined as before, 
based on adaptive information $N_\omega$ consisting 
of linear functionals from a class $\Lambda$. 
 
\item 
Let $n(f,\omega)$ be the cardinality of the  
information $N_\omega$ for $f \in F$. We assume that 
the function               $n$ is measurable. 
\item 
The mapping $(f, \omega)\mapsto \phi_\omega (N_\omega(f)) \in G$  
is measurable.
\end{enumerate} 
 
Let $A$ be a randomized algorithm. Then 
the cardinality of $A$ is defined as 
$$ 
n(A) = \sup_{\Vert f \Vert_F < 1} \int_\Omega 
n(f, \omega) \, {\rm d}\mu (\omega), 
$$ 
whereas the error of $A$ in the randomized setting is 
$$ 
e^{\rm ran} (A) = \sup_{\|f\|_F < 1 } \left( \int_\Omega 
\Vert S(f) - \phi_\omega (N_\omega (f)) 
\Vert^2 \, {\rm d}\mu (\omega) \right)^{1/2}. 
$$ 
 
For $n \in \N$,  we define the $n$th minimal error of $S$ in the 
randomized setting by  
$$ 
e^{\rm all-ran}_n (S) = \inf \{ e^{\rm ran} (A) \, : \, n(A) \le n \} , 
$$ 
if $A$ uses linear functionals from $\Lambda = \lall$. Similarly we define  
$ \wt e^{\rm \ all-ran}_n (S)$ with 
$\Lambda = \tlall$. 
 
Obviously, we can interpret deterministic algorithms as randomized 
with a singleton $\Omega$. That is why the $n$th minimal errors in the 
randomized setting cannot be larger than the $n$th minima errors in  
the worst case setting, 
$$ 
\wt e^{\rm \ all-ran}_n(S)\le \wt e^{\rm \ all-wor}_n(S) 
\ \ \ \mbox{and}\ \ \  
e^{\rm all-ran}_n(S)\le e^{\rm all-wor}_n(S). 
$$   
 
Basically, there exists only one proof technique  
to obtain lower bounds for randomized algorithms,  
and this technique goes back to Bakhvalov, see Section 4.3.3  
of \cite{NW08}.  
The main point is to observe that the errors in the randomized  
setting cannot be smaller than the errors in the average case setting 
for an arbitrary probability measure on $F$.    
 
If we restrict ourselves  to measurable randomized algorithms,  
then  
\begin{equation}  \label{eq03}    
e^{\rm all-ran}_n (S)  \ge \tfrac{\sqrt{2}}{2}\,  
e^{\rm avg} (2n,\rho ) ,  
\end{equation} 
where $e^{\rm avg}(2n,\rho)$ denotes the $2n$-th minimal average case 
error of deterministic algorithms that use at most $2n$ linear 
functionals from $\lall$ and $\rho$ is  
an arbitrary (Borel) probability measure on $F$,  
for more details see Lemma 4.37 and Remark 4.41 of \cite{NW08}. 
The next result is identical with  
Theorem 4.42 from \cite{NW08}, see also \cite{No92}.  
It is stated here with a proof since we need  
a small modification of this result.  
 
\begin{lem}      \label{lemma7}   
Assume that $S: F \to G$ is a compact linear operator  
between Hilbert spaces 
$F$ and $G$. Then 
$$ 
\tfrac{1}{2} \,  e^{\rm all-wor}_{4n-1}  (S)  \le 
e^{\rm all-ran}_n (S) . 
$$ 
\end{lem}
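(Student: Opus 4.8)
The plan is to combine the reduction to the average case setting furnished by~\eqref{eq03} with an explicit average-case lower bound for a measure concentrated on the top singular directions of $S$. Since $S$ is a compact linear operator between the Hilbert spaces $F$ and $G$, it admits a singular value decomposition $Sf=\sum_j\sigma_j\,\langle f,\xi_j\rangle\,\eta_j$, where $(\xi_j)$ and $(\eta_j)$ are orthonormal systems in $F$ and $G$ and $\sigma_1\ge\sigma_2\ge\dots\ge0$. It is classical that for such $S$ the $m$th minimal worst case error equals the $(m+1)$st singular value, $e^{\rm all-wor}_m(S)=\sigma_{m+1}$; in particular $e^{\rm all-wor}_{4n-1}(S)=\sigma_{4n}$. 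In view of~\eqref{eq03} it therefore suffices to exhibit a Borel probability measure $\rho$ supported in the open unit ball of $F$ with
$$ e^{\rm avg}(2n,\rho)\ \ge\ \tfrac{1}{\sqrt2}\,\sigma_{4n}, $$
since then $e^{\rm all-ran}_n(S)\ge \tfrac{\sqrt2}{2}\,e^{\rm avg}(2n,\rho)\ge \tfrac12\,\sigma_{4n}=\tfrac12\,e^{\rm all-wor}_{4n-1}(S)$, which is the claim.

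For the measure I would take $F_N=\linspan\{\xi_1,\dots,\xi_{4n}\}$, fix $r\in(0,1)$, and let $\rho$ be the uniform (rotation invariant) distribution on the sphere of radius $r$ in $F_N$; its correlation operator is $\tfrac{r^2}{4n}P_N$, where $P_N$ is the orthogonal projection of $F$ onto $F_N$. Now I would consider any deterministic algorithm using $2n$ continuous linear functionals. By the standard fact that adaption does not lower the average case error for a rotation invariant measure and a linear problem (see~\cite{NW08}), it is enough to treat nonadaptive information $N_{\rm info}=(L_1,\dots,L_{2n})$. The values $L_1(f),\dots,L_{2n}(f)$ determine the orthogonal projection $P_Vf$ onto a subspace $V\subseteq F_N$ with $\dim V\le 2n$, so the complementary part lives in $W:=F_N\cap V^\perp$ with $\dim W\ge 2n$.

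Because $\rho$ is rotation invariant, the conditional distribution of $w=P_Wf$ given $P_Vf$ is symmetric, so the optimal (conditional mean) algorithm returns $S(P_Vf)$ and its squared average error equals $\E\|Sw\|^2$. A short computation with the correlation operator gives
$$ \E\,\|S\,P_Wf\|^2=\frac{r^2}{4n}\,\operatorname{tr}\!\big(S^*S\,P_W\big)=\frac{r^2}{4n}\sum_{e}\|Se\|^2, $$
the sum being over any orthonormal basis of $W$. Minimizing over admissible $W$ (i.e.\ choosing the $2n$ smallest singular directions) yields $\operatorname{tr}(S^*SP_W)\ge\sum_{j=2n+1}^{4n}\sigma_j^2\ge 2n\,\sigma_{4n}^2$, since each of these $2n$ singular values is at least $\sigma_{4n}$. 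Hence $e^{\rm avg}(2n,\rho)^2\ge \tfrac{r^2}{2}\,\sigma_{4n}^2$, and letting $r\to 1^-$ delivers the required bound.

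I expect the main obstacle to be the average-case step rather than the bookkeeping with singular values: one must justify that the conditional mean is the optimal algorithm and, crucially, that restricting to nonadaptive information causes no loss, so that the lower bound genuinely applies to every algorithm competing in the definition of $e^{\rm avg}(2n,\rho)$. The choices $\dim F_N=4n$ and $2n$ functionals are exactly what make the surviving block $\sigma_{2n+1},\dots,\sigma_{4n}$ consist of $2n$ singular values each $\ge\sigma_{4n}$, producing the factor $\tfrac12$; together with the $\tfrac{\sqrt2}{2}$ from~\eqref{eq03} this reproduces both the constant and the index $4n-1$ in the statement. The support constraint (measures must live in the open unit ball for~\eqref{eq03}) is handled painlessly by the limit $r\to1^-$.
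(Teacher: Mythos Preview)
Your proposal is correct and follows essentially the same route as the paper: both place a rotation-invariant measure on the sphere in the span of the first $4n$ singular vectors, compute the average-case error as $\tfrac{1}{4n}\sum_{j=2n+1}^{4n}\sigma_j^2\ge\tfrac12\sigma_{4n}^2$, and combine this with~\eqref{eq03}. The paper simply asserts that the truncated SVD algorithm $A_n^*$ is average-case optimal, whereas you unpack this via conditional symmetry and a trace/Ky~Fan minimization; you are also slightly more careful about the open-ball support constraint by introducing the radius $r<1$ and passing to the limit, while the paper places $\rho_m$ directly on the unit sphere.
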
  
 
\begin{proof} 
We know that $S(e_i) = \sigma_i \tilde e_i$ with orthonormal 
$\{ e_i \}$ in $F$ and $\{ \tilde e_i \}$ in $G$. Here, $\{\sigma_i\}$  
is a sequence of non-increasing singular values $\sigma_i$ of $S$ and 
$\lim_i\sigma_i=0$.  
We also know that $e^{\rm \ all-wor}_n (S) = \sigma_{n+1}$. 
For $m>n$, consider the normed 
$(m-1)$-dimensional Lebesgue measure~$\rho_m$ on the unit sphere 
$E_m = \{ \sum_{i=1}^m \alpha_i e_i \, : \, \alpha_i \in  
\reals, \ \sum_{i=1}^m \a_i^2 
=1  \}$. 
Then 
$$ 
A_n^* \biggl( \sum_{i=1}^\infty \alpha_i e_i \biggr) = 
\sum_{i=1}^n \sigma_i \alpha_i \tilde e_i 
$$ 
is the optimal algorithm using continuous  
linear information of cardinality $n$. 
This is true for the worst case setting, with error $\sigma_{n+1}$, as well as 
for the average case setting with respect to~$\rho_m$. 
Hence 
$$ 
e^{\rm avg} (n, \rho_m)^2 = 
\int_{E_m}  \sum_{i=n+1}^m \sigma_i^2 \alpha_i^2 \, d \rho_m (\alpha) . 
$$ 
Since $\int_{E_m}  \alpha_i^2 \, d\rho_m (\alpha) = 1/m$ 
we obtain 
$$ 
e^{\rm avg} (n, \rho_m ) ^2 = 
\frac{1}{m} \sum_{i=n+1}^m \sigma_i^2 . 
$$ 
If we put $m=2n$ then we obtain 
$$ 
e^{\rm avg} (n, \rho_{2n} )  \ge  \tfrac12\,\sqrt{2}\,\sigma_{2n} . 
$$ 
Together with \eqref{eq03}, we obtain 
$$ 
e^{\rm \ all-ran}_n ( S)  \ge \tfrac12\,\sqrt{2}\, e^{\rm avg} (2n, \rho_{4n}) 
\ge \tfrac{1}{2} \,  \sigma_{4n}    
= \tfrac{1}{2} \, e^{\rm all-wor}_{4n-1} (S) .  
$$ 
\end{proof} 
 
We stress that in the proof we only use finite dimensional subspaces 
of $F$  
and linear functionals on such subspaces. Of course,  
for finite dimensional spaces,  
we have $\tlall = \lall$ and therefore we obtain  
the following result.  
 
\begin{thm}  \label{theorem8}  
Assume that $S: F \to G$ is a compact operator between  
Hilbert spaces. Then  
$$ 
\tfrac{1}{2} \,  e^{\rm \,  all-wor}_{4n-1}(S)\le 
\wt e^{\rm \,  all-ran}_n (S) \le 
e^{\rm \,  all-wor}_n (S). 
$$ 
\end{thm}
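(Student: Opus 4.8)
The plan is to prove the two inequalities separately. The upper bound will follow at once from Theorem~\ref{adaptive}, while the lower bound amounts to observing that the proof of Lemma~\ref{lemma7}, being entirely finite dimensional, is insensitive to whether we allow discontinuous functionals.

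For the upper bound $\wt e^{\rm \, all-ran}_n(S)\le e^{\rm \, all-wor}_n(S)$, I would start from the elementary inequality $\wt e^{\rm \, all-ran}_n(S)\le \wt e^{\rm \, all-wor}_n(S)$, recorded just before Lemma~\ref{lemma7}, which expresses that a deterministic algorithm is a randomized one over a singleton probability space. A compact linear operator between Hilbert spaces is continuous, so Theorem~\ref{adaptive} applies and gives $\wt e^{\rm \, all-wor}_n(S)=e^{\rm \, all-wor}_n(S)$. Chaining the two yields the upper bound.

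For the lower bound $\tfrac12 e^{\rm \, all-wor}_{4n-1}(S)\le \wt e^{\rm \, all-ran}_n(S)$, I would rerun the argument of Lemma~\ref{lemma7} for the larger class $\tlall$. The decisive point is the one stressed immediately after that proof: each measure $\rho_m$ employed there is concentrated on the finite-dimensional sphere $E_m\subset \linspan\{e_1,\dots,e_m\}$. On a finite-dimensional subspace every linear functional is continuous, so the restriction to this subspace of a functional from $\tlall$ lies in $\lall$, and conversely; hence against $\rho_m$ a randomized algorithm using $\tlall$ is indistinguishable from one using only continuous functionals, and its average error with respect to $\rho_m$ is exactly the $\lall$-quantity computed in Lemma~\ref{lemma7}. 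Consequently the Bakhvalov-type bound \eqref{eq03} holds with $\wt e^{\rm \, all-ran}_n(S)$ on the left, and the remaining computation---the average-case identity giving $e^{\rm avg}(2n,\rho_{4n})\ge\tfrac{\sqrt2}2\,\sigma_{4n}$ combined with \eqref{eq03} applied to $\rho=\rho_{4n}$---is verbatim that of Lemma~\ref{lemma7} and produces $\wt e^{\rm \, all-ran}_n(S)\ge\tfrac12\sigma_{4n}=\tfrac12 e^{\rm \, all-wor}_{4n-1}(S)$.

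The one step needing care, and thus the main obstacle, is justifying that the randomized-to-average-case machinery of \eqref{eq03} transfers from $\lall$ to $\tlall$. This is exactly where the finite-dimensionality of the support of $\rho_m$ is indispensable: a discontinuous functional can gain nothing against a measure living on a finite-dimensional subspace, since there it agrees with a continuous one. I would also check, as a routine matter, that restricting a measurable randomized algorithm to $E_m$ preserves the measurability conditions in the definition of a randomized algorithm, which holds because restriction to a subspace composes with the given measurable maps.
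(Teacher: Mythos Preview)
Your proposal is correct and matches the paper's argument: the lower bound is precisely the observation stressed immediately after Lemma~\ref{lemma7} that the Bakhvalov measures $\rho_m$ are supported on finite-dimensional subspaces where $\tlall=\lall$, so the whole proof of Lemma~\ref{lemma7} carries over verbatim to $\wt e^{\rm \, all-ran}_n$. For the upper bound your route through Theorem~\ref{adaptive} is valid, but note that the simpler chain $\wt e^{\rm \, all-ran}_n(S)\le e^{\rm \, all-ran}_n(S)\le e^{\rm \, all-wor}_n(S)$, coming just from $\lall\subseteq\tlall$ and deterministic $\subseteq$ randomized, gives the inequality without invoking continuity of $S$ or Theorem~\ref{adaptive} at all.
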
  
 
\noindent  
In this sense, randomization as well as allowing  
discontinuous linear functionals from  
$\tlall$ does not (essentially) help for the 
approximation of 
compact operators between Hilbert spaces.  

\begin{rem}  
Assume that $S:F\to G$ is linear and $F$ and $G$ are normed spaces.  
We do not know whether  
$$ 
\lim_{n \to \infty} e^{\rm \ all-ran}_n (S) = 0 
$$ 
implies that $S$ is compact.  
It is shown in \cite{J71}  
that the embedding $I: \ell_1 \to \ell_\infty$ is a universal 
non-compact operator in the sense that
it factors through any non-compact linear bounded operator 
$S: F \to G$ between Banach spaces $F$ and $G$:
$$
\begin{CD}
F @>S>> G\\
@AVAA @VVUV\\
\ell_1 @>I>> \ell_\infty\ .
\end{CD}
$$ 
Here, $I=USV$ for some 
linear bounded operators $U$ and $V$.
It follows that 
$$
 e^{\rm \ all-ran}_n (I) = e^{\rm \ all-ran}_n (USV) \le \|U\| \, e^{\rm \ all-ran}_n (S) \, \|V\|.
$$
Thus it is sufficient to decide whether 
$$ 
\lim_{n \to \infty} e^{\rm \ all-ran}_n (I) > 0 \ \ \ \mbox{or}\ \ \ 
\lim_{n \to \infty} e^{\rm \ all-ran}_n (I) = 0.
$$ 
\end{rem}  

\begin{rem}
 Also in the randomized setting it would be very interesting to know
 whether the error $\enrall(S)$ is a local quantity, i.e. whether
 $$
   \enrall(S) = \sup_{M}\, \enrall(S|_M),
 $$ 
 or at least
 $$
   \enrall(S) \le c\, \sup_{M}\, \enrall(S|_M)
 $$ 
 for some constant $c$ independent of $n$.
 This would lead to an analogue of 
Theorem~\ref{adaptive} in the randomized setting.
\end{rem}

\begin{rem}  
It is interesting to mention that there is a continuous  
\emph{nonlinear} operator  
$S: F \to G$ for normed spaces $F$ and $G$
which is solvable in the randomized setting  
but not in the worst case setting, i.e., 
$$ 
\lim_n\wt e^{\rm \ all-wor}_n(S)=\lim_ne^{\rm \ all-wor}_n(S)>0\ \  
\ \ \ 
\mbox{and}\ \ \ 
\lim_ne^{\rm all-ran}_n(S)=0. 
$$ 
Obviously, the first equality above follows from  
Theorem~\ref{adaptive}. Indeed, let  
$$ 
S: F:=\ell_1 \to G:=\reals, \qquad S(x) = \Vert x \Vert_2^2\ \ \ 
\mbox{for all}\ \ \ x\in F. 
$$ 
Clearly, $S$ is a continuous nonlinear functional.  
Note that the constant algorithm $\tfrac12$ has the worst case  
error~$\tfrac12$ since we have 
$S(x)\in[0,1]$ for $x$ from the unit ball of $\ell_1$.
Let 
$$ 
c:= \lim_{n\to\infty}e^{\rm \ all-wor}_n(S). 
$$ 
Then $c\le \tfrac12$.    
We now show that $c>0$. 
We will use the known result of Kashin about the Gelfand 
width  
$c_n(B_1^m,\ell_2^m)$
for the unit ball $B_1^{\,m}$ of $x\in 
\reals^m$ with $\|x\|_1\le 1$, and with the error measured 
in the $\ell_2$ norm. Namely, 
there are two positive numbers $c_{1,2}$ and $C_{1,2}$ such that  
for all $n<m$ we have 
$$ 
c_{1,2}\,\min\left(1,\frac{\ln(m/n)+1}{n}\right)^{1/2}\le 
   c_n(B_1^{\,m},\ell_2^m) \le
C_{1,2}\,\min\left(1,\frac{\ln(m/n)+1}{n}\right)^{1/2}. 
$$ 
This means that $\lim_{m\to\infty}c_n(B_1^{\,m},\ell_2)\ge
c_{1,2}$. Using the definition of the Gelfand width, we conclude that  
$$ 
\lim_{n\to \infty}\  
\inf_{L_1,L_2,\dots,L_n\in\tlall}\ \  
\sup_{x\in\ell_1, \ L_j(x)=0, \,j=1,2,\dots,n,\ \|x\|_1\le1}\|x\|_2
\ge c_{1,2}. 
$$ 
 
Take now arbitrary adaptive $N=(L_1,L_2,\dots,L_n)$ with linear  
functionals~$L_i$. Then there exists $x$ 
in the unit ball of $\ell_1$ such that 
$L_j(x)=0$ for $j=1,2,\dots,n$ and $\|x\|_2\ge c_{1,2}/2$. 
Let $\alpha\in[0,1]$. Then  
$\alpha\,x$ belongs to the unit ball of $\ell_1$ and $N(\alpha\,x)=0$. 
For an arbitrary algorithm $A=\phi_n(N(\cdot))$ we have 
$A_n(\alpha\,x)=\phi_n(0)$ and 
$S(\alpha\,x)-\phi_n(0)=\alpha\|x\|_2^2-\phi_n(0)$. 
Therefore 
$$ 
e(A_n)\ge\max_{\alpha\in[0,1]}|\alpha\|x\|^2_2-\phi_n(0)| 
\ge\tfrac12\|x\|^2_2\ge\tfrac14c_{1,2}. 
$$ 
Hence $e^{\rm all-wor}_n(S)\ge\tfrac14c_{1,2}$ for all $n$. 
This proves that $c>0$, as claimed.  

In the randomized setting, consider the (random) linear functional  
$$ 
L(x) = \sum_{k=1}^\infty (\pm)\, x_k 
$$ 
with random and independent signs of probability $\tfrac12$.
The variance of the random variable $L(x)$ is  
$\sigma^2 (L(x)) = \Vert x \Vert_2^2$ and it 
can be easily estimated with independent copies $L_i$ of $L$.  
It is well known that the ``empirical variance'' 
$$ 
A_n(x)= \frac{1}{n-1} \sum_{i=1}^n \left( L_i -
\sum_{k=1}^n L_k \right)^2  
$$ 
with independent copies $L_i$ of $L$ has expectation
$S(x)=\Vert x \Vert_2^2$ and variance
$$ 
\sigma^2 (A_n(x)) \le \frac{1}{n} \cdot \Vert x \Vert_4^4
\le \frac1n\cdot \|x\|_1^4.
$$
Therefore
$$
e^{\rm ran}(A_n)=\sup_{\|x\|_{\ell_1} < 1 }\sigma(A_n(x))\le
\frac1{\sqrt{n}},
$$ 
and 
$\lim_ne^{\rm all-ran}_n(S)=0$, as claimed. 
\end{rem}  
 
\section{Function Values}   \label{s4}  
 
So far we did not assume that $F$ is a space of functions  
and we only compared continuous linear information with  
arbitrary linear information. Now we assume that $F$ is a normed   
space of functions $f:D\to\reals$ for some nonempty set $D$. 
  
Let $L_x(f)=f(x)$ for all $f\in F$ and $x\in D$. Since $f(x)$ is well 
defined for all $f\in F$ and $x\in D$, the functionals $L_x$'s are 
linear but \emph{not} necessarily continuous. 
This class of information is called \emph{standard} and denoted by  
$\lstd$. Obviously $\lstd\subseteq \tlall$, however, $\lstd\subseteq 
\lall$ only if all $L_x$'s are continuous.
Still Theorems~\ref{adaptive} and~\ref{theorem8} apply in this case.  

We now consider a more general case when $F$ is a space of
equivalence classes of functions $f:D\to \reals$.    
A major example is $F=L_2 (D)$. Then $L_x$ is not 
even well defined for $f \in F$. On the other hand, we know that for 
$F=L_2(D)$ the functional $L_x(\tilde f)=\tilde f(x)$ is well 
defined for each $\tilde f \in f$ and all $x$ from $D$. 
Here $\tilde f\in f$ means that the well defined function
$\tilde f$ is in the equivalence class $f$.
This type of information is 
successfully used for multivariate integration by 
the standard Monte Carlo algorithm $M_n$ in the randomized setting.  
Here for $D=[0,1]^d$, we have $M_n(f)=n^{-1}\sum_{j=1}^nf(x_j)$ 
for independent and uniformly distributed points $x_j$.
Then 
$$
M_n (\tilde f_1 ) = M_n (\tilde f_2 )
 \qquad a.s. 
$$
if $\tilde f_1, \tilde f_2 \in f$ and one usually 
uses only algorithms with this property. 
We would like to extend the analysis presented in the 
previous section also for the case when
the elements of $F$ are equivalence classes of functions.

We argue as follows.  
As in Section~\ref{s3}, we assume that  
$S: F \to G$ is a compact linear operator between 
the Hilbert spaces $F$ and $G$.  
We know that then  
$$ 
S(e_i) = \sigma_i  e_i'
$$  
with a non-increasing sequence of singular values $\sigma_i$ of $S$, 
$\lim_i\sigma_i=0$, with orthonormal 
$\{ e_i \}$ in $F$ and orthonormal $\{ e_i' \}$ in $G$.
We also know that $e^{\rm all-wor}_n (S) = \sigma_{n+1}$. 
Then 
$$ 
A_n^* \biggl( \sum_{i=1}^\infty \alpha_i e_i \biggr) = 
\sum_{i=1}^n \sigma_i \alpha_i e_i'
$$ 
is an optimal algorithm using continuous linear information 
of cardinality $n$. 
This is also true if we replace $F$ by the  
$n+k$ dimensional space $V_{n+k}={\rm span}(e_1, e_2, \dots 
e_{n+k})$. Hence 
$$ 
\wt e^{\rm \ all-wor}_n (V_{n+k}) = e^{\rm all-wor}_n (V_{n+k}) = \sigma_{n+1} 
=e^{\rm all-wor}_n (S)\ \ \ \mbox{for all}\ \ \ k\ge1. 
$$ 
Suppose that the functions $\tilde e_i$ are elements 
of the equivalence classes $e_i$,
for all $i=1,2, \dots , n+k$. 
Then we have functions in 
$ \widetilde V_{n+k}={\rm span}(\tilde e_1, \tilde e_2, \dots 
\tilde e_{n+k})$ 
that  
are well defined everywhere. With this assumption we only make the oracle  
more powerful, i.e., the lower bound is even stronger.  
In this sense we can think of~$\lstd$ as a subset  
of~$\tlall$. 
By $e_n^{\rm \ std-wor}(S)$ we denote the $n$ minimal worst case 
errors of algorithms that use at most $n$ function values for 
approximating $S$ over $\widetilde V_{n+k}$. 
We obtain the following corollary.  
 
\begin{cor}    \label{cor10}  
Assume that $S:F\to G$ is a compact linear operator between  
Hilbert spaces $F$ and $G$. Then  
$$ 
e_n^{\rm \ std-wor} (S) \ge  \enwall(S)\ \ \  
\mbox{for all} \ \ \  n\in\nat .  
$$  
\end{cor}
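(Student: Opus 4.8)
The plan is to read the corollary off the finite-dimensional computation carried out immediately above the statement, together with the elementary fact that shrinking the class of admissible functionals can only enlarge the minimal error. First I would fix $n$ and an arbitrary $k\ge1$ and recall the chain of equalities already established on the $(n+k)$-dimensional space $V_{n+k}=\linspan(e_1,\dots,e_{n+k})$, namely
$$
\wt e^{\rm \ all-wor}_n(V_{n+k})=e^{\rm all-wor}_n(V_{n+k})=\sigma_{n+1}=e^{\rm all-wor}_n(S).
$$
Here the first equality holds because $\tlall=\lall$ on any finite-dimensional space, while the remaining ones express the optimality of $A_n^*$, whose worst case error equals $\sigma_{n+1}$.

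The decisive observation is that, once the well-defined representatives $\tilde e_i\in e_i$ are fixed, each function evaluation $L_x(\tilde f)=\tilde f(x)$ is an honest linear functional on the finite-dimensional space $\widetilde V_{n+k}=\linspan(\tilde e_1,\dots,\tilde e_{n+k})$ and is therefore automatically continuous, so it belongs to $\lall\subseteq\tlall$. In this precise sense $\lstd$ is a subclass of $\tlall$ on $\widetilde V_{n+k}$, and consequently every algorithm that approximates $S$ over $\widetilde V_{n+k}$ from at most $n$ function values is a special instance of an algorithm using at most $n$ functionals from $\tlall$. Passing to the infimum over the larger class can only decrease the error, whence
$$
e_n^{\rm \ std-wor}(S)\ \ge\ \wt e^{\rm \ all-wor}_n(V_{n+k})\ =\ e^{\rm all-wor}_n(S),
$$
which is the asserted bound. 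Since the right-hand side equals $\sigma_{n+1}$ irrespective of $k$, the estimate is uniform in $k$, and the proof is complete.

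I do not expect a genuine technical obstacle here; the one point deserving care is conceptual rather than computational, namely keeping the direction of the inequality correct and justifying the treatment of $\lstd$ as a subclass of $\tlall$. The difficulty this addresses is that for $F=L_2(D)$ the functionals $L_x$ are undefined on equivalence classes; the remedy, already prepared in the surrounding text, is to evaluate on the fixed representatives $\tilde e_i$. This only makes the oracle more informative, so the lower bound obtained is if anything stronger, and enlarging the admissible functionals from $\lstd$ to $\tlall$ genuinely relaxes the approximation problem, producing the inequality in the stated direction.
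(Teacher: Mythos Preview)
Your proof is correct and is essentially the argument the paper itself gives in the paragraph immediately preceding the corollary: once the representatives $\tilde e_i$ are fixed, function evaluations on the finite-dimensional space $\widetilde V_{n+k}$ are genuine (hence continuous) linear functionals, so $\lstd\subseteq\tlall$ there, and the displayed chain $\wt e^{\rm \ all-wor}_n(V_{n+k})=\sigma_{n+1}=e^{\rm all-wor}_n(S)$ yields the bound. The paper leaves this reasoning implicit and simply states the corollary; you have written out exactly what was intended.
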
  

The same can be said for randomized algorithms. 
In this case we take $k\ge 3n$, update the definition of 
$e_n^{\rm \ std-ran}(S)$ and obtain the following corollary. 
 
\begin{cor}   \label{cor11}  
Assume that $S: F \to G$ is a compact linear operator between  
Hilbert spaces $F$ and $G$. Then  
$$ 
e^{\rm std-ran}_n ( S)  
\ge \tfrac{1}{2} \,  e^{\rm all-wor}_{4n-1}  (S) . 
$$ 
\end{cor}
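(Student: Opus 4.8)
The plan is to reduce the claim to the finite-dimensional situation already treated in Lemma~\ref{lemma7} and Theorem~\ref{theorem8}. Under the updated convention, $\enrstd(S)$ is the minimal randomized error of algorithms using at most $n$ function values, measured over the unit ball of the finite-dimensional space $\widetilde V_{n+k}={\rm span}(\tilde e_1,\dots,\tilde e_{n+k})$, where the $\tilde e_i$ are the fixed everywhere-defined representatives of the equivalence classes $e_i$. I would fix $k\ge 3n$ at the outset, so that $n+k\ge 4n$ and the first $4n$ singular directions are available inside $\widetilde V_{n+k}$.

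The first step is the reduction to continuous information. On the finite-dimensional space $\widetilde V_{n+k}$ every linear functional is automatically continuous, so each evaluation $L_x\in\lstd$ lies in $\lall=\tlall$ there. Hence every randomized algorithm using function values is, on this space, also a randomized algorithm using continuous linear functionals, and shrinking the admissible class of functionals can only enlarge the minimal error. This gives
$$
\enrstd(S)\ \ge\ \enrall(S|_{\widetilde V_{n+k}}).
$$

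The second step applies the lower-bound construction from the proof of Lemma~\ref{lemma7} to the finite-rank operator $S|_{\widetilde V_{n+k}}$, whose singular values are exactly $\sigma_1,\dots,\sigma_{n+k}$. That argument only ever uses the normalized Lebesgue measure $\rho_{4n}$ on the unit sphere $E_{4n}\subset{\rm span}(e_1,\dots,e_{4n})$, and here $E_{4n}\subset\widetilde V_{n+k}$ precisely because $4n\le n+k$. The identical computation
$$
e^{\rm avg}(2n,\rho_{4n})^2=\frac1{4n}\sum_{i=2n+1}^{4n}\sigma_i^2\ \ge\ \tfrac12\,\sigma_{4n}^2
$$
then goes through verbatim, and combining it with the Bakhvalov inequality~\eqref{eq03} yields $\enrall(S|_{\widetilde V_{n+k}})\ge\tfrac12\sigma_{4n}$. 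Since $\sigma_{4n}=e^{\rm all-wor}_{4n-1}(S)$, chaining with the first step gives the assertion.

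The only real care needed is bookkeeping, not a genuine obstacle: the choice $k\ge 3n$ must simultaneously guarantee that $\rho_{4n}$ is supported inside $\widetilde V_{n+k}$ and that the singular values $\sigma_{2n+1},\dots,\sigma_{4n}$ entering the average-case sum are actually present in the restricted problem; both amount to $n+k\ge 4n$. The one conceptual point to verify is that the evaluations are genuinely well-defined and continuous on the support of $\rho_{4n}$, which is exactly what passing to the fixed representatives $\tilde e_i$ secures, since then every member of $\widetilde V_{n+k}$ is an honest everywhere-defined function.
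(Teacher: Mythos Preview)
Your proposal is correct and follows essentially the same approach as the paper. The paper's own argument is the one-line remark ``take $k\ge 3n$, update the definition of $e_n^{\rm std\text{-}ran}(S)$'' preceding the corollary, and you have accurately unpacked it: pass to the finite-dimensional space $\widetilde V_{n+k}$ with $k\ge 3n$, observe that there $\lstd\subseteq\lall=\tlall$, and then rerun the Bakhvalov-type lower bound from Lemma~\ref{lemma7} using the measure $\rho_{4n}$, which is supported inside $\widetilde V_{n+k}$ precisely because $n+k\ge 4n$.
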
  

\begin{rem}  
Note that we did \emph{not} compare  
$e^{\rm std-ran}_n (S)$ with $e^{\rm std-wor}_n (S)$.  
We stress that randomization may help  
a lot for the class $\lstd$.  
This holds if function evaluations are continuous and also  
if function evaluations are not continuous.  
Examples for both cases can be found in Chapter 17 of  
\cite{NW10}.  

A major example is the embedding of a function space into another (larger) 
function space. The literature is very rich, see, e.g., 
\cite{He92,He06,He09a,He09b,He11,Ma91,Ma93,NT06,NW08,Tr10,Vy07}.
One can study the classes $\lall$, $\tlall$ as well as 
$\lstd$ in the worst case setting and in the randomized setting. 
In the randomized setting we do \emph{not} know whether 
$\lall$ and $\tlall$ always lead to the same results since in 
Theorem~\ref{theorem8} we assume that both $F$ and $G$ are Hilbert spaces. 
It is open what happens for general normed spaces $F$ and $G$
and if an analogue of Theorem~\ref{adaptive} for the worst  
case setting also holds in the randomized setting.

We also add that the 
lower bounds in the randomized setting of Heinrich and Math\'e 
for specific spaces $F$ and $G$ 
are valid 
not only for $\lall$ but also for $\tlall$, see \cite{He09a,He09b,He11,Ma91}.
The reason is similar as above in the proof of 
Theorem~\ref{theorem8}. Namely, finite dimensional subspaces 
of $F$ can be used 
for the lower bounds and here all linear functionals are continuous. 
\end{rem} 

\medskip 

{\bf Acknowledgement: } 
We thank Stefan Heinrich for valuable remarks.


\begin{thebibliography}{999}      
       
\newcommand\BAMS{\emph{Bull. Amer. Math. Soc.\ }}      
\newcommand\BIT{\emph{BIT\ }}      
\newcommand\Com{\emph{Computing\ }}      
\newcommand\CA{\emph{Constr. Approx.\ }}       
\newcommand\FCM{\emph{Found. Comput. Math.\ }}      
\newcommand\JAT{\emph{J. Approx. Th.\ }}      
\newcommand\JC{\emph{J. Complexity\ }}       
\newcommand\JMA{\emph{SIAM J. Math. Anal.\ }}      
\newcommand\JMAA{\emph{J. Math. Anal. Appl.\ }}      
\newcommand\JMM{\emph{J. Math. Mech.\ }}      
\newcommand\MC{\emph{Math. Comp.\ }}      
\renewcommand\NM{\emph{Numer. Math.\ }}      
\newcommand\RMJ{\emph{Rocky Mt. J. Math.\ }}      
\newcommand\SJNA{\emph{SIAM J. Numer. Anal.\ }}      
\newcommand\SR{\emph{SIAM Rev.\ }}       
\newcommand\TAMS{\emph{Trans. Amer. Math. Soc.\ }}      
\newcommand\TOMS{\emph{ACM Trans. Math. Software\ }}      
\newcommand\USSR{\emph{USSR Comput. Maths. Math. Phys.\ }}        
      
\frenchspacing       

\bibitem{Bol90}     
B. Bollob\'as,  
\emph{Linear Analysis}, 
Cambridge University Press,  
Cambridge, 1990.  

\bibitem{CS90}     
B. Carl and I. Stephani,  
\emph{Entropy, Compactness and the Approximation of Operators}, 
Cambridge University Press,  
Cambridge, 1990.  
 

\bibitem{He89} 
S. Heinrich,  
On the relation between linear $n$-widths and approximation numbers,  
\emph{J. Approx. Th.} {\bf 58} (1989), 315--333.  
 
\bibitem{He92} 
S. Heinrich,  
Lower bounds for the complexity of Monte Carlo  
function approximation, 
\JC {\bf 8} (1992), 277--300.  

\bibitem{He06}
S. Heinrich,
Randomized approximation of Sobolev embeddings, 
in: Monte Carlo and Quasi-Monte Carlo Methods 2006
(A. Keller, S. Heinrich, H. Niederreiter, eds.), Springer, 
Berlin, 2008, 445--459. 

\bibitem{He09a}
S. Heinrich,
Randomized approximation of Sobolev embeddings II, 
\JC {\bf 25} (2009), 455--472. 

\bibitem{He09b}
S. Heinrich,
Randomized approximation of Sobolev embeddings III, 
\JC  {\bf 25} (2009), 473--507. 

\bibitem{He11}
S. Heinrich, 
Stochastic approximation of functions and applications, 
in: Monte Carlo and Quasi-Monte Carlo Methods 2010
(L. Plaskota, H. Wo\'zniakowski, eds.), Springer, 
Berlin, to appear. 
 
\bibitem{J71} 
W. B. Johnson, 
A universal non-compact operator,  
\emph{Coll. Mathematicum}  {\bf 23} (1971), 267--268.  
 
\bibitem{Ma91} 
P. Math\'e, 
Random approximation of Sobolev embeddings,  
\JC {\bf 7} (1991), 261--281.  
 
\bibitem{Ma93} 
P. Math\'e, 
A minimax principle for the optimal error of Monte Carlo  
methods,  
\CA  {\bf 9} (1993), 23--39.  
 
\bibitem{No88}      
E. Novak,       
\emph{Deterministic and Stochastic Error Bounds in     
Numerical Analysis},      
LNiM {\bf 1349}, Springer-Verlag, Berlin, 1988.       
 
\bibitem{No92} 
E. Novak, 
Optimal linear randomized methods for linear operators in Hilbert 
spaces,  
\JC {\bf 8} (1992), 22--36.  

\bibitem{NT06}
E. Novak, H. Triebel, 
Function spaces in Lipschitz domains and optimal 
rates of convergence for sampling, \CA  {\bf 23} 
(2006), 325--350. 
      
\bibitem{NW08}      
E. Novak and H. Wo\'zniakowski,       
\emph{Tractability of Multivariate Problems},       
Volume I, Linear Information, European Math. Soc., Z\"urich,       
2008.       
 
\bibitem{NW10}     
E. Novak and H. Wo\'zniakowski,            
\emph{Tractability of Multivariate Problems},     
Volume II: Standard Information for Functionals,     
European Math. Soc. Publ. House, Z\"urich,             
2010.      
      
\bibitem{TWW88}       
J. F. Traub, G. W. Wasilkowski and H. Wo\'zniakowski,      
\emph{Information-Based Complexity},      
Academic Press, New York, 1988.      

\bibitem{Tr10}
H. Triebel, 
\emph{Bases in Function Spaces, Sampling, Discrepancy, Numerical 
Integration}, European Math. Soc., Z\"urich, 2010. 

\bibitem{Vy07} 
J. Vyb\'\i ral, 
Sampling numbers and function spaces, 
\JC  {\bf 23} (2007), 773--792. 
 
\end{thebibliography}
\end{document}